\newtheorem{theorem}{Theorem}
\newtheorem{lemma}{Lemma}
\newtheorem*{namedtheorem}{Theorem EKR \cite{TheOriginalEKR}}
\begin{document}

\title{An Erd\H{o}s-Ko-Rado theorem for subset partitions}

 \author{Adam Dyck}\corref{cor1}
 \ead{dyck204a@uregina.ca}

 \author{Karen Meagher \fnref{fn1}}
 \ead{karen.meagher@uregina.ca}
 \cortext[cor1]{Corresponding author}
 \fntext[fn1]{Research supported by NSERC.}

 \address{Department of Mathematics and Statistics,\\
 University of Regina, 3737 Wascana Parkway, S4S 0A4 Regina SK, Canada}


\begin{abstract}
  A $k\ell$-subset partition, or \emph{$(k,\ell)$-subpartition}, is a
  $k\ell$-subset of an $n$-set that is partitioned
  into $\ell$ distinct classes, each of size $k$. Two
  $(k,\ell)$-subpartitions are said to \emph{$t$-intersect} if they
  have at least $t$ classes in common. In this paper, we prove an
  Erd\H{o}s-Ko-Rado theorem for intersecting families of
  $(k,\ell)$-subpartitions. We show that for $n \geq k\ell$,  $\ell \geq
  2$ and $k \geq 3$, the largest $1$-intersecting family contains at most
  $\frac{1}{(\ell-1)!}\binom{n-k}{k}\binom{n-2k}{k}\cdots\binom{n-(\ell-1)k}{k}$
  $(k,\ell)$-subpartitions, and that this bound is only attained by
  the family of $(k,\ell)$-subpartitions with a common fixed class,
  known as the \emph{canonical intersecting family of
    $(k,\ell)$-subpartitions}. Further, provided that $n$ is
  sufficiently large relative to $k,\ell$ and $t$, the largest
  $t$-intersecting family is the family of $(k,\ell)$-subpartitions
  that contain a common set of $t$ fixed classes.
\end{abstract}

\maketitle

\section{Introduction}

In this paper, we shall prove an Erd\H{o}s-Ko-Rado theorem for
intersecting families of subset partitions.  The EKR theorem gives the
size and structure of the largest family of intersecting sets, all of
the same size from a base set. This theorem has an interesting
history, Erd\H{o}s claims in~\cite{MR905276} that the work was done in
1938, but due to lack of interest in combinatorics at the time, is
wasn't until 1961 that the paper was published. Once the result did
appear in the literature it sparked a great deal of interest in
extremal set theory.

To start, we must consider some relevant notation and background
information.  For any positive integer $n$, denote
$[n]:=\{1,\dots,n\}$. A \emph{$k$-set} is a subset of size $k$ from
$[n]$. Two $k$-sets $A$ and $B$ are said to \emph{intersect} if $|A
\cap B| \geq 1$, and for $1 \leq t \leq k$, they are said to be
\emph{$t$-intersecting} if $|A \cap B| \geq t$. A \emph{canonical
  $t$-intersecting family of $k$-sets} is one that contains all
$k$-sets with $t$ fixed elements.

\begin{namedtheorem}
  Let $n \geq k \geq t \geq 1$, and let $\mathcal{F}$ be a
  $t$-intersecting family of $k$-sets from $[n]$. If $n$ is
  sufficiently large compared to $k$ and $t$, then $|\mathcal{F}| \leq
  \binom{n-t}{k-t}$; further, equality holds if and only if
  $\mathcal{F}$ is a canonical $t$-intersecting family of $k$-sets.
\end{namedtheorem}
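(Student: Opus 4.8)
The plan is to establish the lower bound by construction and the matching upper bound, with uniqueness, by compression. The canonical family $\{A \in \binom{[n]}{k} : T \subseteq A\}$ associated to a fixed $t$-set $T$ consists of all ways to adjoin $k-t$ of the remaining $n-t$ elements to $T$, so it has exactly $\binom{n-t}{k-t}$ members and is visibly $t$-intersecting. Hence the maximum is at least $\binom{n-t}{k-t}$, and the real content is the reverse inequality together with the classification of the extremal families. To attack this I would normalise $\mathcal{F}$ using the shift (compression) operators: for $1 \le i < j \le n$, let $S_{ij}$ replace each $A \in \mathcal{F}$ with $j \in A$, $i \notin A$ by $(A \setminus \{j\}) \cup \{i\}$ whenever that set is not already present in $\mathcal{F}$, and fix $A$ otherwise.

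First I would record the two standard facts about $S_{ij}$: it preserves $|\mathcal{F}|$, and it preserves the $t$-intersecting property. The second is a short but delicate case analysis, splitting on whether two given members $A',B'$ of $S_{ij}(\mathcal{F})$ were each moved by the shift; the only nontrivial case is when exactly one of them was moved, where one uses that the pre-images lay in $\mathcal{F}$ and were therefore already $t$-intersecting. Iterating $S_{ij}$ over all pairs $(i,j)$ until no operator applies produces a left-compressed, $t$-intersecting family $\mathcal{G}$ with $|\mathcal{G}| = |\mathcal{F}|$, so it suffices to bound compressed families. Writing $T_0 = \{1, \dots, t\}$, I would split $\mathcal{G} = \mathcal{G}_0 \cup \mathcal{G}_1$, where $\mathcal{G}_0$ is the set of members containing $T_0$ and $\mathcal{G}_1$ the rest, and note $|\mathcal{G}_0| \le \binom{n-t}{k-t}$. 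The heart of the proof is to show $\mathcal{G}_1$ is negligible: left-compression together with the $t$-intersecting condition confines every member of $\mathcal{G}_1$ to a bounded initial segment of $[n]$ (a set whose support reaches high must, after shifting, already contain $T_0$), so $|\mathcal{G}_1|$ is bounded by a quantity depending only on $k$ and $t$. Since this is $o\!\left(\binom{n-t}{k-t}\right)$ while $|\mathcal{G}_0| \le \binom{n-t}{k-t}$, for $n$ beyond an explicit threshold $N(k,t)$ any nonempty $\mathcal{G}_1$ would force $|\mathcal{G}_0|$ strictly below its maximum, and one checks the totals cannot reach $\binom{n-t}{k-t}$ unless $\mathcal{G}_1 = \emptyset$.

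Once $\mathcal{G} = \mathcal{G}_0$, the bound $|\mathcal{F}| = |\mathcal{G}| \le \binom{n-t}{k-t}$ follows, and equality forces $\mathcal{G}$ to be the full canonical family on $T_0$. For the uniqueness statement I would then track the equality case back through the compression: since non-canonical $t$-intersecting families are separated from the bound by the strict gap produced above, equality can only hold if $\mathcal{F}$ was already canonical on some $t$-set, which requires arguing that the applied shifts can be reversed only when they act on an already-canonical family. I expect the main obstacle to be precisely the large-$n$ structure step for $\mathcal{G}_1$: quantifying how left-compression interacts with the $t$-intersecting condition to confine the non-canonical members, since this is exactly where the threshold on $n$ in terms of $k$ and $t$ is generated. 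The $t$-intersection preservation under shifting is routine but must be handled with care, and the reversal of shifts in the uniqueness argument is the second delicate point.
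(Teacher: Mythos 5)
First, a point of orientation: the paper never proves this statement --- it is the classical Erd\H{o}s--Ko--Rado theorem, quoted as background with a citation to \cite{TheOriginalEKR}. The closest the paper comes to a proof is in Section 5, where its dominating-set counting method is shown to recover the $t=1$ case from the inequality $k^2\binom{n-2}{k-2}<\binom{n-1}{k-1}$, valid for $n>k^2(k-1)+1$. Your compression route is therefore a genuinely different (indeed, the historically original) approach, but as written it has a real gap at exactly the step you yourself flag as the heart of the matter. The claim that left-compression ``confines every member of $\mathcal{G}_1$ to a bounded initial segment of $[n]$,'' so that $|\mathcal{G}_1|$ is bounded by a quantity depending only on $k$ and $t$, is false. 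Take $t=1$, $k\geq 3$, and $\mathcal{G}=\{A\in\binom{[n]}{k}:|A\cap\{1,2,3\}|\geq 2\}$: this family is left-compressed and intersecting, yet $\mathcal{G}_1$ contains every set of the form $\{2,3\}\cup X$ with $X\subseteq\{4,\dots,n\}$ and $|X|=k-2$; these members have arbitrarily large support, and there are $\binom{n-3}{k-2}$ of them, a quantity that grows with $n$. What is true, and is the actual content of the classical shifting proofs (Frankl's lemma), is that a left-compressed $t$-intersecting family admits some $i$ with $0\leq i\leq k-t$ such that $|A\cap[t+2i]|\geq t+i$ for \emph{every} member $A$; if $\mathcal{G}_1\neq\emptyset$ then $i=0$ is impossible, so some $i\geq 1$ works, giving $|\mathcal{G}|\leq\binom{t+2i}{t+i}\binom{n}{k-t-i}=O(n^{k-t-1})$. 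This is only $o\bigl(\tbinom{n-t}{k-t}\bigr)$, not $O(1)$, but it does rescue your conclusion; the point is that this lemma, which you neither state correctly nor prove, is precisely the nontrivial work that generates the threshold on $n$.

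Second, the uniqueness step is not sound as described. ``The applied shifts can be reversed only when they act on an already-canonical family'' is an assertion, not an argument: you would need to prove that if $S_{ij}(\mathcal{F}')$ is a full star on some $t$-set then $\mathcal{F}'$ was itself a full star, and this is essentially as hard as the uniqueness statement. Note that uniqueness genuinely fails without a lower bound on $n$ --- for $n=2k$, $t=1$ the family $\{\{1,2\},\{1,3\},\{2,3\}\}$ is left-compressed, maximum, and not a star --- so any pull-back argument must invoke the largeness of $n$, and it is not visible where yours does. The clean repair, which is also exactly the strategy of this paper for subpartitions (its Lemmas 1--3) and of its Section 5 sketch for $k$-sets, is to prove directly, without compressing, that a $t$-intersecting family possessing \emph{no} common $t$-set has size $O(n^{k-t-1})$: then for $n$ large any family attaining $\binom{n-t}{k-t}$ must have a common $t$-set, hence is contained in, and by cardinality equal to, a canonical family. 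That counting argument gives the bound, the threshold, and the uniqueness simultaneously, which is precisely what makes it preferable to compression for the ``$n$ sufficiently large'' form of the theorem.
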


The exact bound on $n$ is know to be $n \geq (t+1)(k-t+1)$ (an elegant
proof of this that using algebriac graph theory is given by Wilson
in~\cite{MR0771733}). If $n$ is smaller than this bound, then there are $t$-intersecting
families that are larger than the canonical $t$-intersecting family. A
complete characterization of the families of maximum size for all
values of $n$ is given by Ahlswede and Khachatrian in~\cite{MR1429238}.

From here, many EKR-type theorems have been developed by incorporating
other combinatorial objects. Frankl and Wilson have considered this
theorem for vector spaces over a finite field~\cite{EKRvspace}, Rands
for blocks in a design~\cite{EKRblock}, Cameron and Ku for
permutations~\cite{EKRperm}, Ku and Leader for partial
permutations~\cite{EKRpperm}, Brunk and Huczynska for
injections~\cite{EKRinj}, and Ku and Renshaw for set partitions and
cycle-intersecting permutations~\cite{EKRpart}. All of these cases
consider combinatorial objects that are made up of what we shall call
\emph{atoms}, and two objects intersect if they contain a common atom
and $t$-intersect if the contain $t$ common atoms. To say that ``an
EKR-type theorem holds'' means that the largest set of intersecting (or
$t$-intersecting) objects is the set of all objects that contain a
common atom (or a common $t$-set of atoms).

In this paper, we shall prove that an EKR-type theorem holds for an
object which we call a \emph{subset partition}. We begin by outlining
the appropriate notation.

A \emph{uniform $\ell$-partition} of $[n]$ is a division of $[n]$ into
$\ell$ distinct, non-empty subsets, known as \emph{classes}, where
each class has the same size and the union of these classes is $[n]$.
Further, a \emph{uniform $k\ell$-subset partition} $P$ is a uniform
$\ell$-partition of a subset of $k\ell$ elements from $[n]$. We shall
also call $P$ a \emph{$(k,\ell)$-subpartition}.  If $P$ is a
$(k,\ell)$-subpartition of $[n]$, then $P=\{P_{1},\dots,P_{\ell}\}$ and
$|P_{i}|=k$ for $i \in \{1,\dots,\ell\}$, with
$|\bigcup\limits_{i=1}^{\ell}P_i| = k\ell$.  Let $U_{\ell,k}^{n}$
denote the set of all $(k,\ell)$-subpartitions from $[n]$, and define
\[
U(n,\ell,k) := |U_{\ell,k}^{n}| =
\frac{1}{\ell!} \binom{n}{k}\binom{n-k}{k}\cdots\binom{n-(\ell-1)k}{k}
= \frac{1}{\ell!}\prod\limits_{i=0}^{\ell-1}\binom{n-ik}{k}.
\]

Two $(k,\ell)$-subpartitions $P=\{P_{1},\dots,P_{\ell}\}$ and
$Q=\{Q_{1},\dots,Q_{\ell}\}$ are said to be \emph{intersecting} if
$P_{i}=Q_{j}$ for some $i,j \in \{1,\dots,\ell\}$. Further, for $1
\leq t \leq \ell$, $P$ and $Q$ are said to be
\emph{$t$-intersecting} if there is an ordering of the classes such that $P_i = Q_i$ for $i = 1,\dots,t$.

A \emph{canonical $t$-intersecting family of $(k,\ell)$-subpartitions} is
a family that contains every $(k,\ell)$-subpartition with a fixed
set of $t$ classes. Such a family has size 
\begin{equation}
  \label{canont} \tag{*} U(n-tk,\ell-t,k) =
  \frac{1}{(\ell-t)!}\prod\limits_{i=t}^{\ell-1}\binom{n-ik}{k}.
\end{equation}
In particular, a  \emph{canonical intersecting family of $(k,\ell)$-subpartitions} has size 
\begin{equation}
  \label{canon1} \tag{**} U(n-k,\ell-1,k) =
  \frac{1}{(\ell-1)!}\prod\limits_{i=1}^{\ell-1}\binom{n-ik}{k}.
\end{equation}
Finally, note that
\begin{equation} \label{recursive} \tag{\dag}
U(n,\ell,k) = \frac{1}{\ell}\binom{n}{k}U(n-k,\ell-1,k),
\end{equation}
and $U(n,0,0) = 1$ for $n \geq 0$.

We shall not consider the cases when $k=1$, as this reduces to the
original EKR theorem \cite{TheOriginalEKR}, when $\ell=1$, where
intersection is trivial, or when $t=\ell$, where intersection is also
trivial.


\begin{theorem}\label{one}
  Let $n, k, \ell$ be positive integers with $n \geq k\ell$, $\ell
  \geq 2$, and $k \geq 3$. If $\mathcal{P}$ is an intersecting family
  of $(k,\ell)$-subpartitions, then
$$|\mathcal{P}| \leq \frac{1}{(\ell-1)!} \prod\limits_{i=1}^{\ell-1}\binom{n-ik}{k}.$$
Moreover, this bound can only be attained by a canonical intersecting family of $(k,\ell)$-subpartitions.
\end{theorem}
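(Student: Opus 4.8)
The plan is to induct on $\ell$, treating $k \ge 3$ as fixed and proving the statement for every $n \ge k\ell$ simultaneously. Throughout I may assume $\mathcal{P}$ is a \emph{maximal} intersecting family, since enlarging a family only strengthens both the bound and the uniqueness claim. For the base case $\ell = 2$, a $(k,2)$-subpartition is an unordered pair of disjoint $k$-sets, i.e. an edge of the graph $G$ whose vertices are the $k$-subsets of $[n]$ and whose edges join disjoint pairs; two subpartitions intersect precisely when the corresponding edges share a vertex. An intersecting family of $(k,2)$-subpartitions is therefore a set of pairwise-adjacent edges of $G$, and by the elementary classification of such families each one is either a \emph{star} (all edges through a fixed vertex $C$) or a \emph{triangle}. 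A star through $C$ has exactly $\binom{n-k}{k}=U(n-k,1,k)$ edges and is precisely the canonical family, while a triangle has $3$ edges; since no triangle exists when $n<3k$, while for $n\ge 3k$ we have $\binom{n-k}{k}\ge\binom{2k}{k}>3$ for $k\ge 3$, the star is strictly larger, giving the base case together with uniqueness.

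For the inductive step, fix a $k$-set $C^{*}$ that occurs as a class in the maximum number of members of $\mathcal{P}$, and write $d$ for this maximum degree. Every member of $\mathcal{P}$ containing $C^{*}$ has the form $\{C^{*}\}\cup R$, where $R$ is an $(\ell-1,k)$-subpartition of the $(n-k)$-set $[n]\setminus C^{*}$, so $d\le U(n-k,\ell-1,k)$, the target bound, which I denote $M$. I split into two cases according to whether this inequality is tight.

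If $d=M$, then $\mathcal{P}$ already contains the entire canonical family $\mathcal{F}_{C^{*}}$ of subpartitions through $C^{*}$, and I claim $\mathcal{P}=\mathcal{F}_{C^{*}}$. Indeed, any further member $Q$ would avoid $C^{*}$, yet would have to share a class with every element of $\mathcal{F}_{C^{*}}$. Since $Q$ has only $\ell$ classes and $n\ge k\ell$ leaves enough room to build an $(\ell-1,k)$-subpartition of $[n]\setminus C^{*}$ whose $\ell-1$ classes avoid the at most $\ell$ classes of $Q$ (a simple union-bound count using the recursion \eqref{recursive} shows such an $R$ exists, even when $n=k\ell$), one obtains a member of $\mathcal{F}_{C^{*}}$ sharing no class with $Q$, a contradiction. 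This settles the extremal case and its uniqueness.

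The remaining case $d<M$ is the main obstacle: here no single class dominates, and I must prove the strict inequality $|\mathcal{P}|<M$, a Hilton--Milner--type estimate. The strategy is to fix a member $P^{0}=\{P_{1},\dots,P_{\ell}\}$, note that every member of $\mathcal{P}$ meets $P^{0}$ in some class, and bound the members through each $P_{i}$ by passing to the link family $\{\,P\setminus\{P_{i}\} : P\in\mathcal{P},\ P_{i}\in P\,\}$ of $(\ell-1,k)$-subpartitions on $[n]\setminus P_{i}$. The difficulty is that this link need not be intersecting, since two of its members may have shared only the deleted class $P_{i}$, so the induction hypothesis does not apply verbatim; the crux is to control this defect using the mutual disjointness of the classes $P_{i}$, the degree bound $d<M$, and the smallness of ``double-class'' counts, namely $U(n-2k,\ell-2,k)\ll U(n-k,\ell-1,k)$, which makes members sharing two classes of $P^{0}$ cheap. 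I expect the hypotheses $k\ge 3$ and $n\ge k\ell$ to enter precisely in these counting estimates, exactly as they do in the base case, and I anticipate that controlling this overlap — converting the non-intersecting links into a usable bound — will be the technical heart of the argument.
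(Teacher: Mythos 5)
Your base case and your case $d=M$ are essentially sound (the union-bound verification that some member of $\mathcal{F}_{C^{*}}$ dodges the classes of $Q$ does go through, even at $n=k\ell$). But the proposal is not a proof: in the remaining case $d<M$ you state a strategy and explicitly defer its ``technical heart''---converting the non-intersecting link families into a usable bound---and that is exactly where the content of the theorem lies. Note that this case reduces to the situation where no single class lies in every member of $\mathcal{P}$ (if some class $C$ did, then $|\mathcal{P}|\le d<M$ and you are done), so what is missing is precisely a Hilton--Milner-type bound for families with no common class. The obstruction you name (two members of the link through $P_i$ that intersected only in the deleted class $P_i$) is real, and nothing in your outline resolves it; induction on $\ell$ never gets off the ground here.

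The paper closes this case with one observation that makes induction unnecessary. Fix $P^{0}=\{P_1,\dots,P_\ell\}\in\mathcal{P}$ and let $\mathcal{P}_i$ consist of the members containing $P_i$ but none of $P_1,\dots,P_{i-1}$. Since no class is common to all of $\mathcal{P}$, there is a member $Q\in\mathcal{P}$ omitting $P_i$; every member of $\mathcal{P}_i$ must intersect $Q$, hence contains $P_i$ \emph{and} one of the $\ell$ classes of $Q$, giving $|\mathcal{P}_i|\le \ell\, U(n-2k,\ell-2,k)$ and so $|\mathcal{P}|\le \ell^{2}U(n-2k,\ell-2,k)$. No intersecting property of the link is needed, because intersection with the second subpartition $Q$ pins down a second class, and subpartitions through two fixed classes are cheap---exactly the quantity $U(n-2k,\ell-2,k)$ you identified as small. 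Comparing with $M=\frac{1}{\ell-1}\binom{n-k}{k}U(n-2k,\ell-2,k)$, the desired inequality reduces to the numerical estimate $\ell^{2}(\ell-1)\le\binom{n-k}{k}$, which holds for $k\ge 3$ and $n\ge k(\ell+1)$; in the range $k\ell+1\le n\le k(\ell+1)-1$ the same argument sharpens to $\ell(\ell-1)U(n-2k,\ell-2,k)$ (at most $\ell-1$ classes of $Q$ can be disjoint from $P_i$, since $Q$ omits at most $k-1$ points of $[n]$), and the boundary case $n=k\ell$ is delegated to Meagher and Moura. If you want to salvage your write-up, replace the link-family step by this two-subpartition count; your treatment of the case $d=M$ can then stand as the uniqueness argument.
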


\begin{theorem}\label{two}
  Let $n, k, \ell, t$ be positive integers with $n \geq
  n_0(k,\ell,t)$ and $1 \leq t \leq \ell-1$. If
  $\mathcal{P}$ is a $t$-intersecting family of
  $(k,\ell)$-subpartitions, then
$$|\mathcal{P}| \leq \frac{1}{(\ell-t)!} \prod\limits_{i=t}^{\ell-1}\binom{n-ik}{k}.$$
Moreover, this bound can only be attained by a canonical $t$-intersecting family of $(k,\ell)$-subpartitions.
\end{theorem}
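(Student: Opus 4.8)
The plan is to prove both the bound and the uniqueness directly by a degree-counting (``kernel'') argument that exploits the large value of $n$. Let $\mathcal{P}$ be a nonempty $t$-intersecting family and fix one member $P_0 \in \mathcal{P}$. Every $P \in \mathcal{P}$ shares at least $t$ classes with $P_0$, so we may assign to $P$ a $t$-subset $T_P$ of the classes of $P_0$ with $T_P \subseteq P$. Since $P_0$ has only $\binom{\ell}{t}$ distinct $t$-subsets of classes, the pigeonhole principle yields a fixed set $T$ of $t$ classes such that the subfamily $\mathcal{P}_T := \{P \in \mathcal{P} : T \subseteq P\}$ satisfies $|\mathcal{P}_T| \ge |\mathcal{P}|/\binom{\ell}{t}$. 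Thus a large family must contain a ``popular'' set of $t$ classes, and the goal becomes to show that, for $n$ large, this popular $t$-set is in fact common to every member.

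I would then split into two cases according to whether $T$ is common to all of $\mathcal{P}$. If $T \subseteq P$ for every $P \in \mathcal{P}$, then $\mathcal{P}$ lies inside the canonical $t$-intersecting family determined by $T$, giving $|\mathcal{P}| \le U(n-tk,\ell-t,k)$ with equality precisely when $\mathcal{P}$ is that entire canonical family. Otherwise there is some $Q \in \mathcal{P}$ with $T \not\subseteq Q$, so $Q$ contains at most $t-1$ of the classes of $T$. Every $P \in \mathcal{P}_T$ still $t$-intersects $Q$, so $P$ and $Q$ must share at least one class lying outside $T$; that is, each $P \in \mathcal{P}_T$ contains $T$ together with at least one of the at most $\ell$ classes of $Q$. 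Counting the subpartitions that contain the $t$ classes of $T$ and one further fixed class gives at most $\ell\, U(n-(t+1)k,\ell-t-1,k)$ such members, whence $|\mathcal{P}_T| \le \ell\, U(n-(t+1)k,\ell-t-1,k)$ and therefore $|\mathcal{P}| \le \binom{\ell}{t}\,\ell\, U(n-(t+1)k,\ell-t-1,k)$.

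To finish, I would compare the two bounds as functions of $n$. Since $U(m,s,k) = \Theta(m^{ks})$ as $m \to \infty$, the Case~2 bound has order $n^{k(\ell-t-1)}$, while the target $U(n-tk,\ell-t,k)$ has order $n^{k(\ell-t)}$, a factor of $\Theta(n^{k})$ larger. Hence there is a threshold $n_0(k,\ell,t)$, obtained by solving the explicit inequality $\binom{\ell}{t}\,\ell\,U(n-(t+1)k,\ell-t-1,k) < U(n-tk,\ell-t,k)$, beyond which Case~2 forces $|\mathcal{P}| < U(n-tk,\ell-t,k)$ strictly. Consequently, for $n \ge n_0$ the extremal families can only arise in Case~1, which yields both the stated bound and the characterization of equality as the canonical $t$-intersecting families. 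The edge case $t=\ell-1$ needs a separate glance, since then $U(n-(t+1)k,\ell-t-1,k) = U(n-\ell k,0,k) = 1$ and the Case~2 bound degenerates to the constant $\binom{\ell}{t}\ell$, which is still beaten for large $n$.

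The conceptual core, and the only place where the ``$n$ sufficiently large'' hypothesis is genuinely needed, is the elimination of Case~2: showing that the popular $t$-set $T$ cannot merely be frequent but must be universal. The main work is therefore quantitative rather than structural, namely pinning down $n_0$ so that the $\Theta(n^{k(\ell-t)})$ lower bound on $|\mathcal{P}_T|$ provably dominates the $\Theta(n^{k(\ell-t-1)})$ upper bound, while keeping the combinatorial constants $\binom{\ell}{t}$ and $\ell$ honest across all admissible $t$ and the degenerate $t=\ell-1$ case. An alternative, should one prefer a self-contained induction, is to replace the asymptotics by the single structural claim that every maximum family has a common class $C$, then strip $C$ off to reduce $(\ell,t)$ to $(\ell-1,t-1)$ on the ground set $[n]\setminus C$, using Theorem~\ref{one} as the base case; but the direct counting above already delivers the exact bound and the uniqueness in one step.
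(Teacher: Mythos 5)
Your argument is correct, and it reaches the theorem by a route that differs from the paper's in its decomposition, though the underlying strategy (bound any non-canonical family by a quantity of order $n^{k(\ell-t-1)}$ and beat it with the canonical size, which is of order $n^{k(\ell-t)}$) is the same. The paper fixes a member $P_0=\{P_1,\dots,P_\ell\}$ and partitions $\mathcal{P}$ into classes $\mathcal{P}_i$ according to the first $P_i$ contained, observing that only $\mathcal{P}_1,\dots,\mathcal{P}_{\ell-t+1}$ can be nonempty; but that lemma requires the hypothesis that no single class is common to all of $\mathcal{P}$, so the paper must first strip off any universal class (reducing $n$ by $k$ and each of $\ell,t$ by $1$) before the lemma applies. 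Your pigeonhole on the $\binom{\ell}{t}$ possible $t$-subsets of $P_0$'s classes sidesteps that reduction entirely: the dichotomy is taken directly on whether the popular $t$-set $T$ is universal, Case~1 immediately yields containment in a canonical family (hence both the bound and the uniqueness), and Case~2 uses a witness $Q$ with $T\not\subseteq Q$ exactly as the paper uses a witness avoiding $P_i$. Your Case~2 constant works out to $\ell(\ell-t)\binom{\ell}{t}$ against the paper's $(\ell-t+1)(\ell-t)\binom{\ell}{t}$ in the comparison with $\binom{n-tk}{k}$, which is marginally weaker but irrelevant since both are independent of $n$. What your version buys is a cleaner, self-contained uniqueness argument (the paper's stripping step is stated rather tersely); what the paper's version buys is a reusable lemma whose $t=1$ specialization, with the sharper constants, drives the proof of Theorem~1 for all $n\geq k\ell$, a regime where your implicit $n_0$ would not suffice.
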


In their 2005 paper, Meagher and Moura \cite{MnM} introduced
Erd\H{o}s-Ko-Rado theorems for $t$-intersecting partitions, which fall
under the case $n = k\ell$. Additionally, for the case $k = 2$ with $n
> k\ell$, a $(k,\ell)$-subpartition is a partial matching; in their
recent paper, Kamat and Misra \cite{EKRpmatch} presented the
corresponding EKR theorems for these objects. They incorporate a very
nice Katona-style proof, but interestingly, it does not appear that
the Katona method would work very well for $(k,\ell)$-subpartitions
(it seems that this proof would require an additional lower bound on
$n$). The goal of this work is to complete the work done in
both~\cite{MnM} and \cite{EKRpmatch} by showing that an EKR-type
theorem holds for subpartitions. In this paper, we specifically do not
consider the case where $k=2$ (as this is done in Kamat and Misra's
work). In Meagher and Moura~\cite{MnM}, the only difficult case is
$k=2$; it is possible that our counting method will work for the
partial matchings if some of the tricks used in \cite{MnM} are
applied.

\section{Three Technical Lemmas}

We shall require lemmas similar to the Lemma 3 used by Meagher and
Moura in \cite{MnM}---the proofs of which use similar counting
arguments. As we shall see, it is worthwhile to consider the size of a
canonical $t$-intersecting family of $(k,\ell)$-subpartitions, and
find when this is an upper bound for the size of any $t$-intersecting
family of $(k,\ell)$-subpartitions.

Define a \emph{dominating set} for a family of
$(k,\ell)$-subpartitions to be a set of classes, each of size $k$,
that intersects with every $(k,\ell)$-subpartition in the family. For
the intersecting families being investigated here, each
$(k,\ell)$-subpartition in the family is also a dominating
set. In~\cite{MnM}, dominating sets were called \emph{blocking
  sets}. We use the term dominating set here because if the classes in
the $(k,\ell)$-subpartitions (the $k$-sets) are considered to be
vertices, then each $(k,\ell)$-subpartition can be thought of as an
edge in an $\ell$-uniform hypergraph on these vertices. As a result, a
family of $(k,\ell)$-subpartitions is a hypergraph, and our definition
of a dominating set for a family of $(k,\ell)$-subpartitions matches
the definition of a dominating set for a hypergraph.

\begin{lemma}
  Let $n, k, \ell$ be positive integers with $n \geq k\ell$, $\ell \geq 2$ and let
  $\mathcal{P} \subseteq U_{\ell,k}^{n}$ be an intersecting family of
  $(k,\ell)$-subpartitions. Assume that there does not exist a $k$-set
  that occurs as a class in every $(k,\ell)$-subpartition in
  $\mathcal{P}$. Then
\begin{equation} \label{noncanon1}
|\mathcal{P}| \leq \ell^2U(n-2k,\ell-2,k).
\end{equation}
\end{lemma}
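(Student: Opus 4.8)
The plan is to fix a single member of the family, treat it as a dominating set, and split the family according to which of its $\ell$ classes a given subpartition meets. First I would choose any $P = \{P_1,\dots,P_\ell\} \in \mathcal{P}$. Since $\mathcal{P}$ is intersecting, every $Q \in \mathcal{P}$ shares a class with $P$, so $P$ is a dominating set and
$$\mathcal{P} = \bigcup_{i=1}^{\ell} \mathcal{P}_i, \qquad \mathcal{P}_i := \{Q \in \mathcal{P} : P_i \in Q\}.$$
By the union bound it then suffices to show $|\mathcal{P}_i| \leq \ell\, U(n-2k,\ell-2,k)$ for each $i$, since summing over the $\ell$ values of $i$ yields the claimed bound $\ell^2 U(n-2k,\ell-2,k)$.

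Next I would exploit the hypothesis that no $k$-set is a class of every member of $\mathcal{P}$. For each fixed $i$, this guarantees a subpartition $R_i \in \mathcal{P}$ with $P_i \notin R_i$. Now take any $Q \in \mathcal{P}_i$. Because $\mathcal{P}$ is intersecting, $Q$ and $R_i$ share a class $C$; and since $P_i \in Q$ but $P_i \notin R_i$, this common class satisfies $C \neq P_i$ and is one of the $\ell$ classes of $R_i$. Hence every $Q \in \mathcal{P}_i$ contains the class $P_i$ together with one of the $\ell$ classes of $R_i$, so
$$|\mathcal{P}_i| \leq \sum_{C \in R_i} |\{Q \in U_{\ell,k}^{n} : P_i, C \in Q\}|.$$
The number of $(k,\ell)$-subpartitions having two prescribed disjoint classes $P_i$ and $C$ is exactly $U(n-2k,\ell-2,k)$, since the remaining $\ell-2$ classes form a $(k,\ell-2)$-subpartition of the untouched $n-2k$ points, and this count is $0$ when $P_i$ and $C$ overlap. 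As $R_i$ has $\ell$ classes, $|\mathcal{P}_i| \leq \ell\, U(n-2k,\ell-2,k)$, which completes the estimate.

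The key step---and the only place the hypothesis is used---is the selection of $R_i$: without a common class we can always avoid any single prescribed class $P_i$, which forces the intersection of $Q$ with $R_i$ onto a second class and produces the pair $(P_i,C)$. The rest is a covering/union bound that happens to be exactly tight for the target quantity, so I do not anticipate a serious obstacle. The points needing care are the degenerate case $\ell=2$, where $U(n-2k,0,k)=1$ correctly counts the single subpartition $\{P_i,C\}$, and the fact that $P$ itself (and repetitions across the sets $\mathcal{P}_i$) may be over-counted, which is harmless for an upper bound. No lower bound on $n$ beyond $n \geq k\ell$, and no restriction $k \geq 3$, is required for this structural estimate.
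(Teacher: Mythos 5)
Your proposal is correct and follows essentially the same argument as the paper: fix one member $P$ of the family as a dominating set, cover $\mathcal{P}$ by the sets $\mathcal{P}_i$ of subpartitions containing $P_i$, use the hypothesis to find a witness avoiding $P_i$, and bound each $|\mathcal{P}_i|$ by $\ell\, U(n-2k,\ell-2,k)$. The only (immaterial) differences are that the paper makes the $\mathcal{P}_i$ disjoint by excluding $P_1,\dots,P_{i-1}$, whereas you use an overlapping cover with a union bound, and your write-up is slightly more careful about the degenerate cases.
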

\begin{proof}
  Let $\{P_1,\dots,P_\ell\}$ be a $(k,\ell)$-subpartition in
  $\mathcal{P}$ and, for $i \in \{1,\dots,\ell\}$, let
  $\mathcal{P}_i$ be the set of all $(k,\ell)$-subpartitions in
  $\mathcal{P}$ that contain the class $P_i$, but none of
  $P_1,\dots,P_{i-1}$. By assumption,
  $P_i$ does not appear in every $(k,\ell)$-subpartition in
  $\mathcal{P}$, so there exists some $(k,\ell)$-subpartition $Q$ that
  does not contain $P_i$. The subpartitions in $\mathcal{P}_i$ and $Q$ must be
  intersecting, so each member of $\mathcal{P}_i$ must contain $P_i$
  as well as one of the $\ell$ classes from $Q$. Thus, we can bound the size of
  $\mathcal{P}_i$ by
\[
|\mathcal{P}_i| \leq \ell U(n-2k,\ell-2,k).
\]
Further, since $\{P_1,\dots,P_\ell\}$ is a dominating set for the family of $(k,\ell)$-subpartitions, we have that
\[
\bigcup\limits_{i \in \{1,\dots,\ell\}}\mathcal{P}_i = \mathcal{P}.
\]
It follows that
\[
|\mathcal{P}| \leq \ell |\mathcal{P}_i| \leq \ell^2 U(n-2k,\ell-2,k),
\]
as required.
\end{proof}

Note that Lemma 1 certainly applies for all $n \geq k\ell$; however,
if the size of $n$ is small enough relative to $k$ and $\ell$, then we
can improve our bound on such an intersecting family $\mathcal{P}$.
Note that in the case of $n = k\ell$, we may use the lemma as
considered by Meagher and Moura in~\cite{MnM}.

\begin{lemma}
  Let $n, k, \ell$ be positive integers with $k\ell + 1 \leq n \leq
  k(\ell+1)-1$, $\ell \geq 2$, and let $\mathcal{P} \subseteq U_{\ell,k}^{n}$ be an
  intersecting family of $(k,\ell)$-subpartitions. Assume that there
  does not exist a $k$-set that occurs as a class in every
  $(k,\ell)$-subpartition in $\mathcal{P}$. Then
\begin{equation} \label{noncanon2}
  |\mathcal{P}| \leq \ell(\ell-1)U(n-2k,\ell-2,k).
\end{equation}
\end{lemma}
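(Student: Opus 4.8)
The plan is to follow the same skeleton as the proof of Lemma 1, but to sharpen the bound on each piece from $\ell\,U(n-2k,\ell-2,k)$ down to $(\ell-1)\,U(n-2k,\ell-2,k)$; summing over the $\ell$ pieces then yields exactly $\ell(\ell-1)U(n-2k,\ell-2,k)$. Concretely, I fix a subpartition $P=\{P_1,\dots,P_\ell\}\in\mathcal{P}$ and, as before, let $\mathcal{P}_i$ be the set of members of $\mathcal{P}$ that contain $P_i$ but none of $P_1,\dots,P_{i-1}$, so that these sets partition $\mathcal{P}$. By the hypothesis that no $k$-set is a class of every member, for each $i$ there is some $Q=\{Q_1,\dots,Q_\ell\}\in\mathcal{P}$ with $P_i\notin Q$, and every $R\in\mathcal{P}_i$ must share a class with $Q$.

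The new ingredient is the upper bound $n\le k(\ell+1)-1$. Since the $\ell$ classes of $Q$ are pairwise disjoint $k$-sets, they cover $k\ell$ points and leave only $n-k\ell<k$ points uncovered. Hence the $k$-set $P_i$ cannot be disjoint from all of $Q_1,\dots,Q_\ell$, for otherwise $P_i$ would be contained in the fewer-than-$k$ uncovered points; thus $P_i$ meets at least one class of $Q$, say $Q_{j_0}$, in a common point. This is the one place where the hypothesis $n\le k(\ell+1)-1$ is essential, and isolating this pigeonhole observation is the crux of the argument (the same phenomenon is what makes the $\ell=2$ case vacuous, since a ``triangle'' of classes would require $k(\ell+1)$ points).

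I then combine this with the structure of $R$. Any $R\in\mathcal{P}_i$ contains $P_i$ together with some class $Q_j$ of $Q$; since $P_i\notin Q$ we have $Q_j\neq P_i$, so $Q_j$ and $P_i$ are distinct classes of the subpartition $R$ and are therefore disjoint. As $Q_{j_0}$ meets $P_i$, it follows that $Q_j\neq Q_{j_0}$, so $R$ in fact contains $P_i$ and one of the \emph{remaining} $\ell-1$ classes of $Q$. Fixing $P_i$ and one such $Q_j$ pins down two disjoint classes of $R$, and the other $\ell-2$ classes form a $(k,\ell-2)$-subpartition of the remaining $n-2k$ points, of which there are $U(n-2k,\ell-2,k)$. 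Therefore $|\mathcal{P}_i|\le(\ell-1)U(n-2k,\ell-2,k)$, and summing over $i$ gives the claim.

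I expect the only subtleties to be bookkeeping: confirming that $Q_j\neq P_i$ so that $P_i$ and $Q_j$ genuinely are two distinct (hence disjoint) classes of $R$, and checking that an empty $\mathcal{P}_i$ causes no difficulty. The genuinely new idea beyond Lemma 1 is the pigeonhole step forcing $P_i$ to intersect a class of $Q$ and thereby eliminating $Q_{j_0}$ as a possible shared class; every other step is the counting already used there.
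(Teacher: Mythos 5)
Your proof is correct and follows exactly the paper's route: the paper's own (one-line) proof of this lemma is precisely the observation that under $n\le k(\ell+1)-1$ the classes of $Q$ leave fewer than $k$ points uncovered, so at most $\ell-1$ classes of $Q$ are disjoint from $P_i$, and the shared class of any $R\in\mathcal{P}_i$ must be one of these; you have simply written out the details the paper leaves implicit. The pigeonhole step and the refined bound $|\mathcal{P}_i|\le(\ell-1)U(n-2k,\ell-2,k)$ are exactly what is intended.
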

\begin{proof}
  Under the restriction on the size of $n$, there are at most $\ell-1$
  classes in $Q$ that do not contain an element from $P_i$. The
  remainder of the proof follows similarly.
\end{proof}

We also adapt a similar lemma for the $t$-intersecting case.

\begin{lemma}\label{four}
  Let $n, k, \ell, t$ be positive integers with $1 \leq t \leq
  \ell-1$, and let $\mathcal{P} \subseteq
  U_{\ell,k}^{n}$ be a $t$-intersecting family of
  $(k,\ell)$-subpartitions. Assume that there does not exist a $k$-set
  that occurs as a class in every $(k,\ell)$-subpartition in
  $\mathcal{P}$. Then
\begin{equation} \label{noncanont}
|\mathcal{P}| \leq (\ell-t+1) \binom{\ell}{t}U(n-(t+1)k,\ell-(t+1),k).
\end{equation}
\begin{proof}
  As in the proof of Lemma 1, let $\{P_1,\dots,P_\ell\}$ be a
  $(k,\ell)$-subpartition in $\mathcal{P}$ and, for $i \in
  \{1,\dots,\ell\}$, define the set $\mathcal{P}_i$ similarly.  Note
  that if we order the $\mathcal{P}_i$ sets, then any
  $(k,\ell)$-subpartition in $\mathcal{P}_{i}$ where $i> \ell-t+1$
  must contain at least one of the classes $\{P_1, \dots, P_{\ell-t+1}\}$, since the
  $(k,\ell)$-subpartitions here must be $t$-intersecting with
  $\{P_1,\dots,P_\ell\}$. The class $P_i$
  does not appear in every $(k,\ell)$-subpartition in $\mathcal{P}$,
  so there exists some $(k,\ell)$-subpartition $Q$ that does not
  contain $P_i$. Any $(k,\ell)$-subpartition $P \in \mathcal{P}_i$ must be $t$-intersecting with Q,
  so there are $\binom{\ell}{t}$ ways to choose the $t$ classes from $Q$
  that are also in $P$. Thus, we can bound the size of $\mathcal{P}_i$
  by
\[
|\mathcal{P}_i| \leq \binom{\ell}{t} U(n-(t+1)k,\ell-(t+1),k).
\]
Further, since
\[
\bigcup\limits_{i \in \{1,\dots,\ell-t+1\}}\mathcal{P}_i = \mathcal{P},
\]
it follows that
\[
|\mathcal{P}| \leq (\ell-t+1) \binom{\ell}{t} U(n-(t+1)k,\ell-(t+1),k),
\]
as required.
\end{proof}
\end{lemma}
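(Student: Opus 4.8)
The plan is to adapt the covering argument from the proof of Lemma~1 to the $t$-intersecting setting. I would begin by fixing an arbitrary $(k,\ell)$-subpartition $\{P_1,\dots,P_\ell\}$ in $\mathcal{P}$ and, for each $i\in\{1,\dots,\ell\}$, defining $\mathcal{P}_i$ to be the set of subpartitions in $\mathcal{P}$ that contain the class $P_i$ but none of $P_1,\dots,P_{i-1}$. Every member of $\mathcal{P}$ must $t$-intersect, and hence in particular $1$-intersect, the fixed subpartition, so it contains at least one of $P_1,\dots,P_\ell$; assigning each member to the least such index shows that the sets $\mathcal{P}_i$ cover $\mathcal{P}$.

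The first key step is to show that $\mathcal{P}_i$ is empty whenever $i>\ell-t+1$. A subpartition $P\in\mathcal{P}_i$ avoids $P_1,\dots,P_{i-1}$, so every class it shares with $\{P_1,\dots,P_\ell\}$ lies in $\{P_i,\dots,P_\ell\}$, a collection of only $\ell-i+1$ classes. Since $P$ must share at least $t$ of them, we need $\ell-i+1\ge t$, that is, $i\le\ell-t+1$. This reduces the cover to $\mathcal{P}=\bigcup_{i=1}^{\ell-t+1}\mathcal{P}_i$.

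The second key step is to bound $|\mathcal{P}_i|$ for each $i\le\ell-t+1$. Here I would invoke the hypothesis that no $k$-set is a class of every member of $\mathcal{P}$, which produces a subpartition $Q\in\mathcal{P}$ not containing $P_i$. Any $P\in\mathcal{P}_i$ must $t$-intersect $Q$, so $P$ contains some $t$ of the $\ell$ classes of $Q$; there are $\binom{\ell}{t}$ ways to select these. Because $Q$ does not contain $P_i$, the class $P_i$ is distinct from all of them, so $P$ has $t+1$ prescribed, pairwise-distinct classes. Once these are fixed, the remaining $\ell-(t+1)$ classes of $P$ form a $(k,\ell-(t+1))$-subpartition of the $n-(t+1)k$ unused elements, giving at most $U(n-(t+1)k,\ell-(t+1),k)$ completions. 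Hence $|\mathcal{P}_i|\le\binom{\ell}{t}U(n-(t+1)k,\ell-(t+1),k)$, and the union bound over the $\ell-t+1$ nonempty sets yields the claimed inequality.

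The argument is essentially bookkeeping, so I expect no serious obstacle; the one point requiring care is the counting in the second step, namely verifying that $P_i$ together with the $t$ chosen classes of $Q$ really are $t+1$ distinct $k$-sets (which is exactly where the non-containment of $P_i$ in $Q$ is used), and confirming that the emptiness claim for $i>\ell-t+1$ is consistent with $\mathcal{P}_i$ excluding $P_1,\dots,P_{i-1}$.
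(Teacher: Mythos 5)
Your proof is correct and takes essentially the same approach as the paper: the same decomposition of $\mathcal{P}$ into the sets $\mathcal{P}_i$, the same observation that $\mathcal{P}_i$ is empty for $i > \ell - t + 1$ (so only $\ell-t+1$ sets are needed in the cover), and the same bound $|\mathcal{P}_i| \leq \binom{\ell}{t}U(n-(t+1)k,\ell-(t+1),k)$ obtained from a subpartition $Q$ omitting $P_i$. Your explicit verification that $P_i$ is distinct from the $t$ chosen classes of $Q$, so that $t+1$ classes are prescribed, is a detail the paper leaves implicit, but the argument is identical.
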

\section{Proof of Theorem 1}

We can use \eqref{noncanon1} or \eqref{noncanon2}, based on the size
of $n$, and compare these bounds with that of \eqref{canon1}.
Informally, we may think of these as bounds on the size of
\emph{non-canonical} families of $(k,\ell)$-subpartitions. If the size
of the canonical family is larger than these bounds, then we know that
the canonical families are the largest and that equality holds if and
only if the intersecting family is canonical.

\begin{proof}[Proof of Theorem~\ref{one}]
Let $\mathcal{P}$ be a non-canonical family of intersecting $(k,\ell)$-subpartitions. We shall show that
\begin{equation} \label{canoncancel}
|\mathcal{P}| < \frac{1}{\ell-1}\binom{n-k}{k} U(n-2k,\ell-2,k).
\end{equation}
It can be verified from \eqref{canon1} and \eqref{recursive} that the
right-hand side of this equation is the size of a canonical
intersecting family of $(k,\ell)$-subpartitions; thus, proving this
equation proves Theorem~\ref{one}.

{\bf Case 1:} $k\ell+1 \leq n \leq k(\ell+1)-1$

If we bound $n$ as such, then by (\ref{noncanon2}),
\[
 |\mathcal{P}| \leq \ell(\ell-1)U(n-2k,\ell-2,k),
\]
and using \eqref{canoncancel}, we only need to prove that
\begin{equation} \label{nsmall}
\ell(\ell-1)^2 \leq \binom{n-k}{k}.
\end{equation}

Since $n \geq k\ell+1$, and using that $k \geq 3$, then by Pascal's rule:
$$\binom{n-k}{k} \geq \binom{k(\ell-1)+1}{k} \geq \binom{3(\ell-1)+1}{3} = \frac{(3\ell-2)(3\ell-3)(3\ell-4)}{3!}.$$

Thus, \eqref{nsmall} can be reduced to checking the inequality
\[
\ell(\ell-1)^2 \leq \frac{(3\ell-2)(3\ell-3)(3\ell-4)}{3!}.
\]
It can be verified, using the increasing function test, that this holds for all $\ell \geq 2$.
{\bf Case 2:} $n \geq k(\ell+1)$

Similar to the previous case, using \eqref{noncanon1} and \eqref{canoncancel}, we only need to show that
\begin{equation} \label{nlarge}
\ell^2(\ell-1) \leq \binom{n-k}{k}.
\end{equation}

As before, taking $n \geq k(\ell+1)$, $k \geq 3$, and using Pascal's rule, we find
$$\binom{n-k}{k} \geq \binom{k\ell}{k} \geq \binom{3\ell}{3} = \frac{3\ell(3\ell-1)(3\ell-2)}{3!}.$$
So, \eqref{nlarge} can be rewritten as
\[
\ell^2(\ell-1) \leq \frac{3\ell(3\ell-1)(3\ell-2)}{3!},
\]
and we find that this also holds for all $\ell \geq 2$.

Thus, (\ref{canoncancel}) holds for all values of $n$, completing the
proof of Theorem 1. \end{proof}

\section{Proof of Theorem 2}

Theorem 2 incorporates the $t$-intersection property, proving a more
general EKR-type theorem for $(k,\ell)$-subpartitions.  Here, the
precise lower bound on $n$ for determining when only the canonical
families are the largest is unknown---but we shall see that if $k \geq
t+2$, then it suffices to take $n \geq k(\ell+t)$ (though this bound
is not optimal).

\begin{proof}[Proof of Theorem 2]
The size of a canonical $t$-intersecting family of $(k,\ell)$-subpartitions, using \eqref{canont} and \eqref{recursive}, is
\begin{equation} \label{canontcancel}
U(n-tk,\ell-t,k) = \frac{1}{\ell-t} \binom{n-tk}{k} U(n-(t+1)k,\ell-(t+1),k).
\end{equation}

As before, let $\mathcal{P}$ be a non-canonical family of
$t$-intersecting $(k,\ell)$-subpartitions. If there is a class that is
contained in every $(k,\ell)$-subpartition of $\mathcal{P}$, then it
can be removed from every such subpartition in $\mathcal{P}$. This
does not change the size of the family, but reduces $n$ by $k$ and
each of $\ell$ and $t$ by $1$. Now we only need to show that this new
family is smaller than the canonical $(t-1)$-intersecting family of
$(k,\ell-1)$-subpartitions from $[n-k]$ (the size of which is equal to
$U(n-(t-1)k,\ell-(t-1),k$). As such, we may assume that there are no
classes common to every $(k,\ell)$-subpartition in $\mathcal{P}$, and
we can apply \eqref{noncanont}.

To prove this theorem, we need to prove that for $n$ sufficiently large,
\begin{equation} \label{tcomparison}
  (\ell-t+1)(\ell-t)\binom{\ell}{t} < \binom{n-tk}{k}.
\end{equation} 
Clearly, this inequality is strict if $n$ is
sufficiently large relative to $t$, $\ell$ and $k$.
\end{proof}

Consider the case where $k \geq t+2$. If $n \geq k(\ell+t)$, then
(\ref{tcomparison}) holds when
\[
  (\ell-t+1)(\ell-t)\binom{\ell}{t} \leq \binom{\ell k}{k}.
\]
Since $k \geq t+2$, we have that
\[
\binom{\ell k}{k} = \left( \frac{\ell k}{k}\right) \left(\frac{\ell k - 1}{k-2}\right)\binom{\ell k -2}{k-2}
   > (\ell -t + 1) (\ell-t) \binom{\ell}{t},
\]
so (\ref{tcomparison}) holds indeed. We do not attempt to find the function $n_0(k,\ell,t)$ that produces the exact lower bound on $n$, but such a lower bound is needed, as shown by the example in
\cite[Section 5]{MnM}.

\section{Extensions}

There are versions of the EKR theorem for many different objects. In
this final section, we shall outline how this method can be generalized to these different objects.

In general, when considering an EKR-type theorem, there is a set of
objects with some notion of intersection. We shall consider the case
when each object is comprised of $k$ \textsl{atoms}, and two objects
are intersecting if they both contain a common atom. If the objects
are $k$-sets, then the atoms are the elements from $\{1,\dots,n\}$,
and each $k$-set contains exactly $k$ atoms. For matchings, the atoms
are edges from the complete graph on $2n$ vertices, and a $k$-matching has $k$ atoms. In this
paradigm, if the largest set of intersecting objects is the set of all
the objects that contain a fixed atom, then an EKR-type theorem holds.

We can apply the method in this paper to this more general
situation. Assume we have a set of objects and that each object
contains exactly $k$ distinct atoms from a set of $n$ atoms (there may be many
additional rules on which sets of atoms constitute an object). Let
$P(n,k)$ be the total number of objects, $P(n-1,k-1)$ the number of
objects that contain a fixed atom, and $P(n-2,k-2)$ the number of
objects that contain two fixed atoms.

Using the same argument as in this paper, if for some type of object
(as above) we have
\[
k^2P(n-2,k-2) < P(n-1,k-1),
\]
then an EKR-type theorem holds for these objects. It is very
interesting to note that if the ratio between $P(n-1,k-1)$ and
$P(n-2,k-2)$ is sufficiently large, then an EKR-type
theorem holds.

For example, this can be applied to $k$-sets. In this case, the equation is
\[
k^2 \binom{n-2}{k-2} < \binom{n-1}{k-1},
\]
which holds if and only if
\[
k^2(k-1) + 1 < n.
\]
This proves the standard EKR theorem, but with a very bad lower bound on $n$.

For a second example, consider length-$n$ integer sequences with entries from
$\{0,1,\dots, q-1\}$. In this case the atoms are ordered pairs $(i,a)$
where the entry in position $i$ of the sequence is $a$. Two sequences
``intersect'' if they have the same entry in the same position. Each
sequence contains exactly $n$ atoms, so in this case $k=n$. The values
of $P(n-1,n-1)$ and $P(n-2,n-2)$ are $q^{n-1}$ and $q^{n-2}$,
respectively.  Thus the EKR-type theorem for integer sequences holds
if $n^2 q^{n-2} < q^{n-1}$, or equivalently if $n^2 < q$. Once again
we have a simple proof of the an EKR-type theorem, but with an
unnecessary bound on $n$.


For a final example consider the blocks in a $t$-$(n,m,\lambda)$
design.  The blocks are $m$-sets so the are $t$-intersecting if they
contain a common set of $t$-elements. It is straight-forward to
calculate the number of a blocks the contain any $s$-set where
$s \leq t$ is 
\[
\lambda \frac{\binom{n-s}{t-s}}{\binom{m-s}{t-s}}.
\]
Thus we have that the EKR theorem holds for intersecting blocks in a $t$-$(n,m,\lambda)$ if
 \[
 m^2 \frac{\lambda \binom{n}{t}\binom{m}{2}}{\binom{m}{t}\binom{n}{2}} 
  \leq  \frac{\lambda \binom{n}{t}\binom{m}{1}}{\binom{m}{t}\binom{n}{1}}
\]
which reduces to
\[
 m^3 -m^2 +1 <n.
\]
This is exactly the bound found by Rands~\cite{EKRblock}.  Moreover,
this method can be applied to $s$-intersecting blocks in a design; again we get
the same bound as in \cite{EKRblock}.

\newpage

\end{document}